\documentclass{amsart}
\calclayout
\usepackage{amssymb,amsmath,amsfonts,epsfig,latexsym,tikz}
\usepackage[alphabetic]{amsrefs}
\usepackage{tikz-cd}
\definecolor{mystery}{rgb}{0,0,0.65}
\usepackage[
  colorlinks=true,
  linkcolor=mystery,
  citecolor=mystery,
  urlcolor=mystery
]{hyperref}
\usepackage{enumerate}
\usepackage{mathtools}
\usepackage{verbatim}
\usepackage{cleveref}
\usepackage[shortlabels]{enumitem}
\usepackage{subcaption}

\usetikzlibrary{positioning}
\usetikzlibrary{matrix}
\usetikzlibrary{decorations}
\usetikzlibrary{decorations.pathreplacing, decorations.pathmorphing, angles,quotes}
 
\newtheorem{theorem}{Theorem}[section]

\newtheorem{lemma}[theorem]{Lemma}

\newtheorem{conjecture}[theorem]{Conjecture}

\theoremstyle{definition}
\newtheorem{remark}[theorem]{Remark}
\newtheorem{example}[theorem]{Example}

\newcommand{\W}[1]{\ensuremath{W^{#1}}}

\newcommand{\floor}[1]{\ensuremath{\left\lfloor #1 \right\rfloor}}

\newcommand{\abs}[1]{\ensuremath{\left\lvert #1 \right\rvert}}
\begin{document}

\title{Matroid flat counts can have many peaks}

\makeatletter
\def\author@andify{%
  \nxandlist{\unskip, }{}{\unskip, }
}
\makeatother

\author{Alexander Divoux}
\address{Program in Applied and Computational Mathematics, Princeton University}
\email{adivoux@princeton.edu}
\author{Matt Larson}
\address{Princeton University and the Institute for Advanced Study}
\email{mattlarson@princeton.edu}
\author{Chayim Lowen}
\address{Department of Mathematics, Princeton University}
\email{chayiml@princeton.edu}
\author{Shouda Wang}
\address{Program in Applied and Computational Mathematics, Princeton University}
\email{shoudawang@princeton.edu}
 
\subjclass[2020]{Primary 05B35}
\keywords{Unimodality, Rota's conjecture, Whitney number}
    
\date{\today}
\begin{abstract}
We disprove Rota's conjecture that the counts of flats in a matroid according to rank form a unimodal sequence. Furthermore, we show that this sequence can have arbitrarily many peaks. The construction starts by finding a generalized theta graph for which log-concavity fails severely. By taking direct sums, we break log-concavity in many places. We then use Whittle's $q$-lift construction to produce a matroid whose flat counts have many peaks. 
\end{abstract}
 
\maketitle

\section{Introduction}

For a matroid $M$ and a natural number $i$, let $W_i$ be the number of flats of rank $i$. These are known as the Whitney numbers of the second kind of $M$. 
In his article for the proceedings of the 1970 International Congress of Mathematicians, Gian-Carlo Rota stated his conjecture that these numbers form a unimodal sequence for any matroid \cite{RotaICM}.

\begin{conjecture}\label[conjecture]{conj:rota}
For any matroid $M$ of rank $r$, the sequence $(W_0, W_1, \dotsc, W_r)$ is unimodal. That is, there is some index $k$ such that $W_0 \le W_1 \le \dotsb \le W_{k-1} \le W_k$ and $W_k \ge W_{k+1} \ge \dotsb \geq W_{r-1} \geq W_r$. 
\end{conjecture}

In \cite{Mason}, Mason conjectured three strengthenings of Conjecture~\ref{conj:rota}, the weakest of which is that Whitney numbers are log-concave. These conjectures have attracted significant attention over the years. For example, Mason's log-concavity conjecture is Problem 25(c) in Stanley's list of positivity problems in algebraic combinatorics \cite{StanleyPositivity}. 

In making Conjecture~\ref{conj:rota}, Rota was motivated by the known cases of Boolean lattices, braid matroids \cites{HarperBraid, LiebBraid}, and perfect matroid designs \cite{YoungDesign}; see \cite[{pg.\ 69}]{RotaMatching}. The log-concavity conjecture was proved for certain supersolvable matroids \cite{Damiani} and for Dowling geometries \cite{StonesiferMatroid,Benoumhani}. See \cite{Aigner} for a survey. 

The special case of Mason's log-concavity conjecture at $k=2$, that $W_2^2 \ge W_1 W_3$, has attracted particular attention. This conjecture, which is known as the points-lines-planes conjecture, was proved for graphic matroids in \cite{Stonesifer} and for matroids where each flat of rank $2$ has size at most four in \cite{Seymour}. In fact, what was proved there is the strongest of Mason's log-concavity conjectures for these matroids: that $W_2^2 \ge \frac{3}{2} \frac{W_1 - 1}{W_1 - 2} W_1 W_3$. The points-lines-planes conjecture was subsequently studied by Kung \cite{KungPLP} and Dukes \cite{Dukes}. 

The most significant progress towards Conjecture~\ref{conj:rota} was the proof of the Dowling--Wilson top-heavy conjecture \cites{DW1,DW2}: if $M$ is a matroid of rank $r$ then $W_i \le W_{r-i}$ for $i \le r/2$ and $W_0 \le W_1 \le \dotsb \le W_{\lfloor r/2 \rfloor}$. This was proved for realizable matroids in \cite{HW} and then for all matroids in \cite{BHMPW20b}. A simpler proof was given in \cite{LefschetzModule}. 

As is explained in \cite{KungGeometric}, Conjecture~\ref{conj:rota} was motivated by the Alexandrov--Fenchel inequality for mixed volumes of convex bodies. In recent years, Rota's intuition has been partially validated, as analogues of the Alexandrov--Fenchel inequality have been used to affirmatively resolve several log-concavity conjectures about matroids \cites{Huh2012,HK12,AHK18,BH,ALOV}. See \cite{HuhICM1} for a discussion of the analogy between the Alexandrov--Fenchel inequality and these results. 

We give a counterexample to Conjecture~\ref{conj:rota}.
Better yet, we show that unimodality can fail very badly, in the following sense. In a sequence $(a_0, a_1, \dotsc, a_s)$ of real numbers, a \emph{peak} is an index $i$ such that $a_i > a_{i-1}$ and $a_i = a_{i + 1} = \dotsb = a_{i + t} > a_{i + t + 1}$ for some nonnegative integer $t$, where we interpret $a_{-1}$ and $a_{s+1}$ as $-\infty$. 
A sequence is unimodal if and only if it has a unique peak. 

\begin{theorem}\label[theorem]{thm:main}
For each positive integer $m$, there is a matroid whose sequence of Whitney numbers of the second kind has exactly $m$ peaks. 
\end{theorem}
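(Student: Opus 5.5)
The abstract fixes the strategy, and I will follow it in three stages: a single severe failure of log-concavity, many such failures via direct sums, and conversion of failures of log-concavity into actual peaks via a $q$-lift.

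\emph{Step 1 (a bad generalized theta graph).} Let $\Theta_{a_1,\dots,a_k}$ be the graph with two vertices joined by $k$ internally disjoint paths of lengths $a_1,\dots,a_k$. Its flats correspond to partitions of the vertex set into connected pieces. For a theta graph these are easy to count: a flat either contains no full path, in which case it is a union of independent choices along each path, or it contains at least one full path, in which case the two poles are merged. Either way the generating polynomial $\sum_i W_i t^i$ factors as a product over the paths, plus a correction term. I would compute this polynomial in closed form. I would then choose parameters, for instance many paths of one moderate length $\ell$, so that near some rank $j$ the flats of rank $j$ are dominated by ``all paths broken'' configurations while rank $j+1$ picks up a jump from ``poles merged'' configurations. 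The goal is a ratio $W_{j-1}W_{j+1}/W_j^2$ that is large, say bigger than an arbitrary constant $C$. This is the main obstacle. The asymptotic analysis must produce an honest, quantitative failure of log-concavity, not just a marginal one, and the right choice of $k$ and $\ell$ has to be found.

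\emph{Step 2 (direct sums).} The Whitney generating polynomial is multiplicative under direct sum. Taking $M_0^{\oplus n}$, or sums of Step-1 examples with staggered ranks, produces a polynomial whose coefficient sequence fails log-concavity at many separated places. Because the failure in Step 1 is severe, with ratio at least $C$, the failure survives convolution with the other factors. Concretely, I would show that the $n$-fold product behaves locally like a product of a smooth, nearly log-linear envelope with a periodic defect coming from the single bad factor. This yields at least $m$ indices where $W_{i-1}W_{i+1} > W_i^2$ by a definite factor.

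\emph{Step 3 ($q$-lift).} Log-concavity failures are not yet peaks. Whittle's $q$-lift of a matroid $N$ lifts $N$ to a matroid over $GF(q)$ in which flats are counted with $q$-weights. I would use it to multiply the Whitney numbers by a factor like $q^{\binom{i}{2}}$ or some geometric tilt $\lambda^i$, up to lower-order terms. A tilt by $\lambda^i$ preserves the ratios $W_{i-1}W_{i+1}/W_i^2$ while letting me choose the overall slope. I would tune the parameters so that in a window of ranks the sequence becomes nearly flat. Each strong convexity defect then turns into a local minimum flanked by local maxima, while the sequence stays monotone outside the window. This gives exactly $m$ peaks, with the error terms from the $q$-lift controlled by taking $q$ large. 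To get exactly $m$ peaks rather than at least $m$, I would take exactly $m-1$ dips inside the window and verify monotonicity elsewhere using top-heaviness-type bounds.
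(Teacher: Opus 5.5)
Your three-stage outline is the paper's, but the steps that carry the content are either missing or would not work as described.

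\textbf{Step 1.} You never produce the family, and the shape you suggest (all strands of one common length) shows no growing failure. With $k$ strands of length $t$, the dual consists of $k$ parallel classes of size $t$ whose simplification is $U_{k-1,k}$. Flats of corank $\eta$ correspond to cyclic sets of nullity $\eta$ in the dual, and counting these gives $W^\eta=\Theta(t^{\eta+k-1})$ for every $\eta\ge 1$, a concave exponent sequence. The missing idea is asymmetry. The paper uses strands $1,t,t,t$, so one parallel class of $G_t^*$ has size one. This suppresses the small-corank counts to $W^1=\Theta(t^3)$ and $W^2=\Theta(t^4)$, while $W^\eta=\Theta(t^{\eta+3})$ for $\eta\ge 3$. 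Hence $W^1W^3/(W^2)^2=\Theta(t)$.

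\textbf{Step 2.} The claim that a failure of ratio at least $C$ survives convolution is not an argument: further convolution can absorb a defect of bounded size. What you need is also stronger than many failures: all of them must straddle one common growth rate, so that a single tilt flattens the whole window. The paper gets this by fixing $m$ and letting $t\to\infty$, so that convolution becomes max-plus on exponents, $f(\eta,m)=\max_{x_1+\dots+x_m=\eta}\sum_i f(x_i,1)$. This equals $\lfloor 3(\eta+m)/2\rfloor$ for $m\le\eta\le 3m$, where the exponent increments alternate $1,2$.

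\textbf{Step 3.} You need the precise Bonin--Qin formula $W_i(\widetilde M)=q^iW_i(M)+W_{i-1}(M)$ for simple $\mathbb F_q$-representable $M$, not a $q^{\binom i2}$ weighting. You also need representability of $G_t^{\oplus m}$ over the chosen field (it is regular), and a prime $p$ of order $t^{3/2}$, which Bertrand's postulate supplies. The error term $W_{i-1}$ is negligible not because $q$ is large, but because $p^i$ with $i=\Theta(t)$ dwarfs the polynomial ratios $W_{i-1}/W_i$. The size of $p$ itself is pinned strictly between the alternating growth rates $t$ and $t^2$. With these choices $\widetilde W^\eta$ has strict local maxima at $\eta=m+2s+1$ for $0\le s\le m$.

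\textbf{Exactness.} Top-heaviness gives only $W_0\le\dots\le W_{\lfloor r/2\rfloor}$ (and $W_i\le W_{r-i}$). In a construction of this type the window of dips sits at bounded corank. The $\Theta(t)$ coranks between the window and $r/2$ are controlled neither by top-heaviness nor by asymptotics valid for fixed corank, so you cannot certify that no extra peaks appear there. The paper avoids needing the global shape. It produces at least $m+1$ peaks as above and then truncates repeatedly. Truncation deletes the hyperplane count $W^1$, which lowers the number of peaks by at most one. Truncating down to rank $1$ leaves the single peak of $(1,1)$, so some intermediate truncation has exactly $m$ peaks.
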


This refutes Conjecture~\ref{conj:rota}.
Recall that a sequence ($a_0, a_1, \dots, a_s)$ of nonnegative real numbers is \emph{log-concave} if $a_i^2 \geq a_{i-1} a_{i+1}$ for every $0 < i < s$. As log-concave sequences with no internal zeroes are unimodal, \Cref{thm:main} also gives a counterexample to Mason's log-concavity conjecture. The smallest example of a matroid that our method gives whose Whitney numbers are not unimodal has a ground set of size $4957$; see Example~\ref{ex:nonunimodal}. The smallest example of a matroid that we can construct whose Whitney numbers are not log-concave has a ground set of size $78$; see Example~\ref{ex:nonlogconcave}. The unique failure of log-concavity in the latter example is that $W_{73}^2 < W_{72} W_{74}$. Our techniques seem incapable of producing a counterexample to the points-lines-planes conjecture. 

\medskip

We prove Theorem~\ref{thm:main} by a construction that proceeds in two steps. First, we exhibit examples where log-concavity of the Whitney numbers fails very badly. For this, we use \emph{generalized theta graphs}. Let $G_t$ be the graphic matroid of the generalized theta graph consisting of two endpoint vertices and four internally disjoint paths joining them, where three of the paths have length $t$ and the remaining path consists of a single edge. See Figure~\ref{fig:logconc} for a depiction of $G_{28}$. 
This is a matroid of rank $3t-2$ on a ground set of size $3t + 1$. We show that the log-concavity of the Whitney numbers fails badly for this family of matroids: the quantity $W_{3t-5} W_{3t-3}/W_{3t-4}^2$ grows linearly in $t$. Moreover, by taking direct sums of these matroids, we can produce examples where log-concavity fails at many indices. 

\def\Len{10}      
\def\dotr{1.5pt} 
\def\bigr{3pt}    
 
\tikzset{
  edge/.style={line width=.8pt, line cap=round, line join=round},
  vert/.style={fill=black},
  term/.style={fill=black},
}
 
\newcommand{\longpath}[2]{
  \pgfmathsetmacro{\Rr}{(\Len*\Len)/(8*(#1)) + (#1)/2}
  \pgfmathsetmacro{\Cy}{(#1) - \Rr}
  \pgfmathsetmacro{\angv}{atan2(\Rr-(#1), -\Len/2)}
  \pgfmathsetmacro{\angw}{atan2(\Rr-(#1),  \Len/2)}
  \pgfmathtruncatemacro{\pathlength}{28}
  \pgfmathtruncatemacro{\lastvertex}{\pathlength - 1}
  \draw[edge] (0,0)
    \foreach \i in {1,...,\pathlength} {
      -- ({\Len/2 + \Rr*cos(\angv + (\i/\pathlength)*(\angw-\angv))},{(#2)*(\Cy + \Rr*sin(\angv + (\i/\pathlength)*(\angw-\angv)))})
    };
  \foreach \i in {1,...,\lastvertex} {
    \fill[vert] ({\Len/2 + \Rr*cos(\angv + (\i/\pathlength)*(\angw-\angv))},{(#2)*(\Cy + \Rr*sin(\angv + (\i/\pathlength)*(\angw-\angv)))}) circle (\dotr);
  }
}

\begin{figure}
    \begin{tikzpicture}
      \draw[edge] (0,0) -- (\Len,0);
     
      \longpath{3.0}{ 1}   
      \longpath{1.4}{ 1}   
      \longpath{4.5}{ 1}   
     
      \fill[term] (0,0)    circle (\bigr);
      \fill[term] (\Len,0) circle (\bigr);
      \node at (0,4.7) {}; 
    \end{tikzpicture}
    \caption{The generalized theta graph whose matroid is $G_{28}$.}
    \label{fig:logconc}
\end{figure}

To produce many peaks and prove Theorem~\ref{thm:main}, we use the following elementary observation: a sequence $(a_0, a_1, a_2, \dotsc, a_m)$ is log-concave with no internal zeroes if and only if, for every $c > 0$, the sequence $(a_0, c a_1, c^2 a_2, \dotsc, c^m a_m)$ is unimodal. To exploit this, we use an operation introduced by Whittle \cite{Whittle89} called the $q$-lift. This operation turns a matroid with Whitney numbers $(W_0, W_1, W_2, W_3, \dotsc)$ realized over $\mathbb{F}_q$ into a matroid with Whitney numbers $(W_0, q W_1 + W_0, q^2 W_2 + W_1, q^3 W_3 + W_2, \dotsc)$. Choosing $q$ judiciously, and using that direct sums of $G_t$ with itself are realizable over any field, we produce examples of matroids whose Whitney numbers have arbitrarily many peaks. 

\medskip

The construction used to prove Theorem~\ref{thm:main} has similarities to two prominent constructions that were used to give counterexamples to other conjectures about matroids. In \cite{Sokal}, Sokal used generalized theta graphs (with a varying number of paths) to show that the roots of chromatic polynomials of graphs are dense in the complex plane. The coefficients of chromatic polynomials of graphs (and more generally characteristic polynomials of matroids) are known as Whitney numbers of the \emph{first} kind, and they can be viewed as counts of flats weighted by the M\"{o}bius function. However, it was shown in \cite{AHK18} that Whitney numbers of the first kind are log-concave. 

The dual of the matroid $G_t$ is a non-simple matroid whose simplification is the uniform matroid $U_{3,4}$. In \cite{MeroniWelsh}, a non-simple matroid whose simplification is uniform was used to refute the Merino--Welsh conjecture, which predicted an inequality for the Tutte polynomial of all matroids. 

\subsection*{Organization of this paper}

The rest of this paper is organized as follows. Section \ref{sec:lcc} is devoted to the proofs of Lemmas \ref{lm:gap} and \ref{lm:asymptotics}, which exhibit failures of log-concavity for the Whitney numbers of the second kind. \Cref{ex:nonlogconcave} provides an explicit matroid for which Mason's conjecture fails, along with the relevant flat counts. In Section \ref{sec:unimodality}, we prove \Cref{thm:main}. \Cref{ex:nonunimodal} gives a matroid whose Whitney numbers of the second kind form a non-unimodal sequence.

\subsection*{Acknowledgments}

The authors thank Paul Seymour and Luis Ferroni for helpful discussions. 
Part of this work was conducted at the 2026 \textit{Combinatorics at the Confluence} conference, and the authors thank the organizers and Carnegie Mellon University for their hospitality.
This work was conducted while the second author was at the Institute for Advanced Study, where he is supported by the Charles Simonyi Endowment and the Oswald Veblen Fund. 

In producing this work, ChatGPT played an important role in finding some initial examples which the authors were able to generalize. These aspects of our constructions have been detailed in the preprints \cite{LarsonCounterexamples} and \cite{DivouxLowenWang}, which are not intended for publication. Beyond these initial examples, AI tools did not meaningfully assist in the results or writing of this paper.

\section{Failures of log-concavity}\label{sec:lcc}

We will make repeated use of Knuth's asymptotic notation, as set out in \cite{Knuth}. In particular, for functions $f,g$ on $\mathbb N$ valued in the nonnegative reals, we write $f = \Theta(g)$ to mean that there exist constants $C, D > 0$ and an integer $N$ such that $Cg(n) \leq f(n) \leq Dg(n)$ for all $n \geq N$.

In the following lemma and in the subsequent text, we write $\W{\eta}(M)$ for the Whitney number $W_{r(M)-\eta}(M)$ of a matroid $M$, i.e. the number of flats of $M$ of \emph{corank} $\eta$.
We also write $E(M)$ for the ground set of $M$ and $r_M$ for its rank function. We write $r(M)$ for
$r_M(E(M))$, the {rank} of $M$ as a matroid. We refer the reader to \cite{Oxley} for other standard conventions in matroid theory.
Recall the matroid $G_t$, which was defined as the graphic matroid of the graph consisting of two endpoints and four internally disjoint paths between them, three of length $t$ and one of length $1$.

\begin{lemma}\label[lemma]{lm:gap}
    For a fixed integer $\eta \ge 0$, as $t$ grows to infinity, the Whitney numbers of $G_t$ are given asymptotically by
    \[
        \W{\eta}(G_t) = \begin{cases}
            1 & \text{if $\eta = 0$,}\\
            \Theta(t^{\eta + 2}) & \text{if $\eta \in \{1,2\}$,}\\
            \Theta(t^{\eta + 3}) & \text{if $\eta \geq 3$.}
        \end{cases}
    \]
\end{lemma}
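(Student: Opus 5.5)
The plan is to turn the flat count into a count of vertex partitions and then estimate that count by counting cut points on the three long paths. Recall that in a graphic matroid of a connected graph $G=(V,E)$, flats correspond bijectively to partitions of $V$ into blocks that each induce a connected subgraph. The flat is the set of edges lying inside blocks, and its rank is $|V|$ minus the number of blocks. Since $r(G_t)=|V|-1$, the flats of corank $\eta$ correspond to partitions of $V$ into exactly $\eta+1$ connected blocks. The case $\eta=0$ is then immediate, since the only such partition is $\{V\}$. For $\eta\ge 1$ I will count these partitions up to constant factors, with $\eta$ fixed and $t\to\infty$.

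Write $a,b$ for the two endpoint vertices and $P_1,P_2,P_3$ for the interiors of the three long paths, each a path on $t-1$ vertices. Every block not containing $a$ or $b$ is connected and avoids the endpoints, so it lies in a single $P_i$ and is an interval there. The blocks containing $a$ or $b$ meet each $P_i$ in an initial and/or final segment. I split into two cases according to whether $a$ and $b$ lie in the same block.

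\emph{Case 1: $a,b$ lie in different blocks $A\ni a$ and $B\ni b$.} Then each $P_i$ is cut into a (possibly empty) prefix joined to $A$, then $j_i\ge 0$ interior intervals, then a (possibly empty) suffix joined to $B$. The total number of blocks is $2+\sum j_i=\eta+1$, so $\sum j_i=\eta-1$.
\begin{itemize}
\item Describing path $i$ amounts to choosing $j_i+1$ cut positions among $\Theta(t)$ slots, subject to the constraint that the middle intervals are nonempty. This gives $\Theta(t^{j_i+1})$ choices; when $j_i=0$ it is a single cut, with $t$ choices.
\item Summing over the boundedly many compositions $(j_1,j_2,j_3)$ of $\eta-1$, this case contributes $\Theta(t^{\eta-1+3})=\Theta(t^{\eta+2})$.
\end{itemize}

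\emph{Case 2: $a,b$ lie in the same block $B$.} The edge $ab$ keeps $B$ connected. On each path, either the whole of $P_i$ lies in $B$ (one way, contributing $j_i=0$), or $P_i$ is cut into a prefix, $j_i\ge1$ intervals, and a suffix. In the latter case there are $j_i+1$ cuts, giving $\Theta(t^{j_i+1})$ choices. Here $\sum j_i=\eta$, so the exponent is $\eta+\#\{i: j_i\ge1\}$, which is at most $\eta+\min(3,\eta)$. This maximum is attained by taking $\min(3,\eta)$ of the $j_i$ positive.

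Comparing the two cases gives the three regimes of the lemma:
\begin{itemize}
\item For $\eta=1$, Case 2 gives $t^2$ and Case 1 gives $t^3$, so $W^1=\Theta(t^3)$.
\item For $\eta=2$, both cases give $t^4$.
\item For $\eta\ge3$, Case 2 dominates with $t^{\eta+3}$, versus $t^{\eta+2}$ from Case 1.
\end{itemize}

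The only step needing care is the uniform $\Theta$ estimate for the number of ways to cut a path of $t-1$ vertices into prescribed numbers of possibly-empty end pieces and nonempty middle pieces. I would handle it by the standard stars-and-bars count: this number is a binomial coefficient $\binom{t+O(1)}{j+1}$ with $j$ fixed, hence $\Theta(t^{j+1})$. Because there are only finitely many compositions, the sum over them preserves both the upper and the lower bound. I would also check that distinct configurations give distinct partitions and that every block produced is connected, so that the correspondence with flats is exact.
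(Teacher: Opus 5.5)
Your proof is correct, and it takes a genuinely different route from the paper's.

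\emph{The paper's route.} The paper dualizes: $G_t^*$ is a rank-$3$ matroid whose simplification is $U_{3,4}$, and flats of corank $\eta$ become cyclic sets $S=E(G_t)\setminus F$ of nullity $\eta$. The upper bound $O(t^{\eta+3})$ comes from the one-line estimate $\abs{S}\le r(G_t^*)+\eta=\eta+3$. For $\eta=2$, a separate argument (splitting on whether $S\supseteq P_0$) sharpens this to $O(t^4)$. The lower bounds come from explicit families, such as two elements from each of $P_1,P_2$ and $\eta-1$ from $P_3$.

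\emph{Your route.} You stay in the graph and use the connected-partition description of graphic flats. Your per-path counts are in fact exact, namely $\binom{t}{j_i+1}$: a choice of the $j_i+1$ edges of that path that join distinct blocks. So you actually get the closed formula
\[
W^\eta(G_t)=\sum_{j_1+j_2+j_3=\eta-1}\ \prod_{i=1}^3\binom{t}{j_i+1}\;+\sum_{j_1+j_2+j_3=\eta}\ \prod_{i\colon j_i\ge 1}\binom{t}{j_i+1},
\]
valid for all $\eta\ge 0$. Your Case 1/Case 2 split is precisely the dual of the paper's split on whether $S$ contains $P_0$ (the edge $ab$). The cut edges on path $i$ are exactly the elements of $S\cap P_i$. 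So the two proofs look at the same objects from opposite sides.

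\emph{What each buys.} Yours gives uniformity and precision: matching upper and lower bounds in all three regimes at once, with no special case for $\eta=2$. Your formula also adapts verbatim to unequal strand lengths (replace $\binom{t}{\cdot}$ by $\binom{t_i}{\cdot}$). For instance, it reproduces $W_{74}=25\cdot 26^2+\binom{25}{2}+2\binom{26}{2}=17850$ in Example~\ref{ex:nonlogconcave}. The paper's argument is independent of the graph: it uses only the rank-$3$ parallel-class structure of the dual. That is why it transfers directly to the remark about cothickening three elements of a cocircuit in an arbitrary matroid, where no vertex-partition description is available.
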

\begin{proof}
    In the dual $G_t^*$, the four series classes of $G_t$ become parallel classes $P_0, P_1, P_2, P_3$, with $\abs{P_0}=1$ and $\abs{P_1}=\abs{P_2}=\abs{P_3}=t$. Simplifying these parallel classes yields the uniform matroid $U_{3,4}$, so each circuit of $G_t^*$ is either a two-element subset of one of $P_1, P_2, P_3$, or has size four and consists of one element from each parallel class. For every flat $F$ of $G_t$ of rank $0 \le j \le r(G_t)$, the corresponding cyclic set $S = E(G_t)\setminus F$ of $G_t^*$ satisfies
    \[
        \abs{S}-r_{G_t^*}(S) = r(G_t) - r_{G_t}(F) = r(G_t) - j
    \]
    by the dual rank formula. We will refer to the quantity $\abs{S} - r_{G_t^*}(S)$ as the \emph{nullity} of $S$ (in $G_t^*$). By the equation above, computing $\W{\eta}(G_t)$ amounts to counting cyclic sets $S$ of $G_t^*$ of nullity $\eta$. The equality $\W{0}(G_t) = 1$ is clear.

    A cyclic set $S$ of nullity $1$ is simply a circuit. By the above classification of the circuits of $G_t^*$, the number of cyclic sets of nullity $1$ is
    \[
        \W{1} = 3\binom t2 + t^3 = \Theta(t^3).
    \]

    Let $S$ be a cyclic set of nullity 2. Then $S$ is the union of two distinct circuits of $G_t^*$, so
    \[
        \abs{S} = r_{G_t^*}(S) + 2 \leq r(G_t^*) + 2 = 5.
    \]
    If $S$ contains $P_0$, there are at most four elements of $S$ in $P_1\cup P_2\cup P_3$. If $S$ does not contain $P_0$, then it is the union of two size 2 circuits, and therefore has at most four elements.
    In either case, $S$ is determined by a subset of $P_1 \cup P_2 \cup P_3$ of size 3 or 4. Hence
    \[
        \W{2} \leq 2 \left({3t \choose 4} + {3t \choose 3}\right) = O(t^4).
    \]
    On the other hand, the union of a pair of elements of $P_1$ with another pair from $P_2$ is a cyclic set of nullity 2. So,
    \[
        \W{2} \geq {t \choose 2}^2 = \Omega(t^4).
    \]
    We deduce that $\W{2} = \Theta(t^4)$.

    Now we prove the claim for $\eta \geq 3$. On the one hand, a set $S$ of nullity $\eta$ satisfies 
    \[
        \abs{S} = r_{G_t^*}(S) + \eta \leq r(G_t^*) + \eta = 3 + \eta.
    \]
    It follows that
    \[
        \W{\eta} \leq \sum_{j=0}^{\eta + 3} {3t + 1 \choose j} = O(t^{\eta + 3}).
    \]
    On the other hand, any set consisting of two elements from $P_1$, two elements from $P_2$, and $\eta - 1$ elements from $P_3$ is cyclic (since $\eta - 1 \geq 2$), has size $\eta + 3$, has rank 3, and thus has nullity $\eta$. Hence
    \[
        \W{\eta} \geq {t \choose 2} {t \choose 2} {t \choose \eta - 1} = \Omega(t^{\eta + 3}).
    \]
    We therefore have $\W{\eta} = \Theta(t^{\eta + 3})$ for $\eta \ge 3$, as desired.
\end{proof}

\begin{example}\label[example]{ex:nonlogconcave}
Consider the matroid of the generalized theta graph with four strands of lengths $1, 25, 26, 26$. This is a matroid of rank $75$ on a ground set of size $78$. We have 
$$(W_{72}, W_{73}, W_{74}) = (49120475, 933425,17850).$$
The Whitney numbers of this matroid are not log-concave. 
\end{example}

\begin{remark}
The argument above works just as well for generalized theta graphs with more strands (of lengths $1, t, \dotsc, t$) or strands of various other lengths. It can also be embedded within a larger matroid, in the following sense. Choose a cocircuit $D$ of size at least $4$ in a matroid $M$. Let $M_t$ be the matroid obtained by cothickening three of the elements of this cocircuit. That is, $M_t$ is the matroid obtained by taking the dual of $M$, replacing three of the elements of $D$ with parallel classes of size $t$, and then taking the dual again. We then have $W^1(M_t) = \Theta(t^3)$, $W^2(M_t) = \Theta(t^4)$, and $W^3(M_t) = \Theta(t^6)$, so log-concavity will fail for $t$ sufficiently large. 
\end{remark}

We write $M^{\oplus m}$ for the direct sum of $m$ copies of a matroid $M$. Taking \Cref{lm:gap} as a starting point, we now compute asymptotics for the Whitney numbers of $G_t^{\oplus m}$.

\begin{lemma}\label[lemma]{lm:asymptotics}
    For fixed integers $\eta \ge 0$ and $m \ge 1$, the Whitney numbers of $G_t^{\oplus m}$ as $t$ grows to infinity are given asymptotically by
        \[
        \W{\eta}(G_t^{\oplus m}) = \begin{cases}
            \Theta(t^{3\eta}) & \text{if $\eta \leq m$,}\\
            \Theta(t^{\floor{3(\eta + m)/2}}) & \text{if $m \leq \eta \leq 3m$,}\\
            \Theta(t^{\eta + 3m}) & \text{if $\eta \geq 3m$.}
        \end{cases}
        \]
\end{lemma}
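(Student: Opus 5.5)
Flats of a direct sum are exactly products of flats of the summands, and coranks add. So the plan is to reduce to an optimization over the exponents from \Cref{lm:gap}. Concretely,
\[
\W{\eta}(G_t^{\oplus m}) = \sum_{\eta_1 + \dotsb + \eta_m = \eta} \prod_{i=1}^m \W{\eta_i}(G_t),
\]
where the sum runs over compositions of $\eta$ into $m$ nonnegative parts. The number of compositions is a constant depending only on $\eta$ and $m$, and every term is positive. Hence $\W{\eta}(G_t^{\oplus m}) = \Theta(t^{e})$, where $e$ is the maximum over compositions of $\sum_i f(\eta_i)$. Here, by \Cref{lm:gap}, $f(0)=0$, $f(1)=3$, $f(2)=4$, and $f(k)=k+3$ for $k\ge 3$. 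The whole proof then reduces to computing
\[
e(\eta,m)=\max\Bigl\{\textstyle\sum_i f(\eta_i) : \eta_1+\dots+\eta_m=\eta,\ \eta_i \ge 0\Bigr\}.
\]

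To compute this maximum, write the gain of each part as $g(k)=f(k)-k$, so that $g(0)=0$, $g(1)=2$, $g(2)=2$, and $g(k)=3$ for $k\ge3$. Then $e=\eta+\max\sum_i g(\eta_i)$. Let $a$, $b$, $c$ be the numbers of parts equal to $1$, equal to $2$, and at least $3$. Only these counts matter, subject to $a+b+c\le m$ and $a+2b+3c\le \eta$; any leftover weight can be dumped onto a part of size $\ge3$ without changing $g$, or must go somewhere if $c=0$. We maximize $2a+2b+3c$. Since a part of size $2$ costs more than a part of size $1$ for the same gain, we take $b=0$ except where parity forces otherwise.

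The three regimes then follow.
\begin{itemize}
\item If $\eta\le m$: take all parts in $\{0,1\}$. Each unit of $\eta$ earns gain $2$, the best rate available, since $g(k)/k\le 2$. So $e=3\eta$.
\item If $\eta\ge 3m$: every part can be at least $3$, which gives the maximal gain $3$ per part. So $e=\eta+3m$.
\item If $m\le\eta\le 3m$: we trade parts of size $1$ (gain $2$ each) for parts of size $3$ (gain $3$), under the constraints $a+c\le m$ and $a+3c\le\eta$. The linear relaxation is optimal at $a+c=m$, $a+3c=\eta$, giving $c=(\eta-m)/2$ and gain $2m+(\eta-m)/2$, so $e=(3\eta+3m)/2$. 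When $\eta-m$ is odd, use one part of size $2$, or equivalently one extra unit on a size-$3$ part, losing $1/2$. This gives $\lfloor 3(\eta+m)/2\rfloor$.
\end{itemize}

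The upper bound in the middle regime needs a clean argument, and I expect it to be the main obstacle. I would note that $g(k)\le \tfrac12 k+\tfrac32$ for every $k\ge1$: check $k=1,2,3$ directly, and for $k\ge3$ the left side is constant. Summing over the at most $m$ nonzero parts gives $\sum g\le \eta/2+3m/2$, hence $e\le \lfloor 3(\eta+m)/2\rfloor$ by integrality. Similarly, $g(k)\le 2k$ gives $e\le 3\eta$, and $g(k)\le 3$ gives $e\le\eta+3m$. The lower bounds come from the explicit compositions above. Together these establish all three cases and agree at the boundaries $\eta=m$ and $\eta=3m$.
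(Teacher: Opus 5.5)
Your proposal is correct and follows essentially the same route as the paper. Both arguments use the convolution formula for direct sums and reduce to maximizing $\sum_i g(\eta_i)$ with $g(k)=f(k)-k$ (the paper's $c$). Both use the same pointwise bounds $g(k)\le 2k$, $g(k)\le (k+3)/2$ and $g(k)\le 3$ for the three upper bounds, and the same explicit compositions for the lower bounds: ones; threes plus at most one two plus ones; and threes with one large part. Your linear-relaxation discussion is only motivation, but your final paragraph gives the rigorous upper bounds, so nothing is missing.
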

\begin{proof}
    It is easily verified that this agrees with \Cref{lm:gap} when $m = 1$. The flats of the direct sum of two matroids consist of all pairwise unions of a flat from the first matroid and a flat from the second, and the rank of such a flat is the sum of the ranks of its two parts. Therefore, the sequence $(W^{\eta}(G_t^{\oplus m}))_{\eta}$ is the $m$-fold convolution of the sequence of $(W^{\eta}(G_t))_{\eta}$ with itself. In other words,
    \[
        \W{\eta}(G_t^{\oplus m}) = \sum_{x_1 + \dots + x_m = \eta}~\prod_{i=1}^m \W{x_i}(G_t),
    \]
    with the implicit constraint $x_1,\dots, x_m \ge 0$. Combining this with \Cref{lm:gap}, we conclude that $\W{\eta}(G_t^{\oplus m}) = \Theta(t^{f(\eta, m)})$ for the unique integer-valued function $f$ satisfying
    \[
        f(\eta, m) = \sup_{x_1+\dots + x_m = \eta}~\sum_{i=1}^m f(x_i,1),
    \]
    where the values $f(\eta, 1)$ are given by
    \[
        f(\eta, 1) = \begin{cases}
            0 & \text{if $\eta = 0$,}\\
            \eta + 2 & \text{if $\eta \in \{1,2\}$,}\\
            \eta + 3 & \text{if $\eta \geq 3$.}
        \end{cases}
    \]
    It will be convenient to rewrite the latter equalities as follows.
    Write $f(\eta, 1) = \eta + c(\eta)$. Then $c(0) = 0$, $c(1) = c(2) = 2$, and $c(\eta) = 3$ for $\eta \ge 3$. So by the formula for $f(\eta, m)$,
    \[
        f(\eta, m) = \eta + \sup_{x_1+\dots + x_m = \eta}~\sum_{i=1}^m c(x_i).
    \]
    To compute $f(\eta, m)$, we must 
    choose $x_1, \dots, x_m$ to maximize the sum of the $c(x_i)$'s. To this end, we split into three cases according to whether $\eta \le m$, $m \le \eta \le 3m$, or $\eta \ge 3m$. In each case, we prove an upper bound and then show that equality is attained by an explicit choice of $x_i$'s.

    First suppose that $\eta \le m$. Since $c(x) \le 2x$ for every $x \ge 0$, we have $\sum_{i=1}^m c(x_i) \le 2\eta$, so $f(\eta, m) \le 3\eta$. On the other hand, setting $x_1 = \cdots = x_\eta = 1$ and $x_{\eta + 1} = \cdots = x_m = 0$ achieves equality. So in fact $f(\eta, m) = 3\eta$.

    Next suppose that $m \le \eta \le 3m$. Since $c(x) \le (x+3)/2$ for every $x \ge 0$, we get the inequality $\sum_{i=1}^m c(x_i) \le (\eta + 3m)/2$. Since $f(\eta, m)$ is an integer, this gives
    \[
        f(\eta, m) \le \eta + \left\lfloor\frac{\eta+3m}{2}\right\rfloor = \left\lfloor \frac{3(\eta + m)}{2}\right\rfloor.
    \]
    The lower bound for this case is slightly more involved. Write $\eta - m = 2a + b$ with $b\in\{0,1\}$. We set $x_1 = \cdots = x_a = 3$, $x_{a+1} = \cdots = x_{a + b} = 2$, and $x_{a+b+1} = \cdots = x_m = 1$. (Note that $\eta \le 3m$ implies $a + b \le m$.) For these values, using $c(3) = 3$ and  $c(1) = c(2) = 2$ gives $\sum_{i=1}^m c(x_i) = 3a + 2b + 2(m - a - b) = a + 2m$. Furthermore, 
    \[
        \left\lfloor\frac{\eta+3m}{2}\right\rfloor = \left\lfloor\frac{2a + b + 4m}{2}\right\rfloor = a + 2m + \left\lfloor\frac b2\right\rfloor = a + 2m.
    \]
    We conclude that $f(\eta, m) = \eta +  \lfloor(\eta+3m)/2\rfloor = \lfloor 3(\eta + m)/2\rfloor$.

    Finally, suppose that $\eta \ge 3m$. Since $c(x) \le 3$ for every $x \ge 0$, we get $\sum_{i=1}^m c(x_i) \le 3m$. So $f(\eta, m) \le \eta + 3m$. Here, equality can be achieved by taking $x_1 = \eta - 3(m-1)$ and $x_2 = \cdots = x_m = 3$. (Note that $\eta \ge 3m$ implies $\eta - 3(m-1) \ge 3$, and thus $c(x_1) = 3$.) Hence $f(\eta, m) = \eta + 3m$.

    Combining the three cases above, we have
    \[
        f(\eta, m) = \begin{cases}3\eta & \text{if $\eta \le m$,}\\\lfloor 3(\eta + m)/2\rfloor & \text{if $m \le \eta \le 3m$,}\\ \eta + 3m & \text{if $\eta \ge 3m$},\end{cases}
    \]
    which matches the desired exponents in the expression for $\W{\eta}(G_t^{\oplus m})$.
\end{proof}

\section{Failures of unimodality}\label{sec:unimodality}

Throughout this section, we let $q$ be a prime power and let $\mathbb F_q$ be the finite field of cardinality $q$. Let $M$ be a simple $\mathbb F_q$-representable matroid of rank $r$ on $n$ elements.
Fixing a linear $\mathbb F_q$-representation of $M$, we identify $E(M)$ with the set of vectors in $\mathbb{F}_q^r$ corresponding to this representation.
Embed $\mathbb F_q^r$ into $\mathbb F_q^{r+1} = \mathbb F_q \oplus \mathbb F_q^r$ as the second factor. We consider the matroid $\widetilde{M}$ of rank $r+1$ consisting of the vector $a = (1, 0, \dots, 0) \in \mathbb F_q^{r+1}$ and the vectors  $(z,v)$, where $z$ runs over $\mathbb F_q$ and $v$ runs over $E(M)$.
This operation was introduced and studied by Whittle in \cite{Whittle89}, where $\widetilde{M}$ is called the \textit{$q$-lift} of $M$. 

Perhaps surprisingly, Oxley and Whittle \cite{OxleyWhittle00} showed by an example that different initial representations of $M$ can yield non-isomorphic $q$-lifts of $M$. Nevertheless, it was shown by Bonin and Qin \cite{BoninQin01} that any two $q$-lifts of $M$ have the same Tutte polynomial. They also share the same Whitney numbers of the second kind.
The following lemma of Bonin and Qin counts the number of flats of any $q$-lift $\widetilde{M}$ of $M$ in terms of the number of flats of $M$. 
It is the crucial ingredient used to turn the failures of log-concavity in \Cref{lm:gap} into failures of unimodality.
\begin{lemma}[{\cite[Lemma 4]{BoninQin01}}]\label[lemma]{lm:q-lift}
    If $M$ is a simple $\mathbb F_q$-representable matroid of rank $r$ and $\widetilde{M}$ is a $q$-lift of $M$, then for every $1\leq i\leq r$ we have
    \[
        W_i(\widetilde{M}) = q^i W_i (M) + W_{i-1}(M).
    \]
\end{lemma}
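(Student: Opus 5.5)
We prove this by sorting the flats of $\widetilde{M}$ according to whether they contain the apex vector $a$, and identifying each class with flats of $M$. Write $\pi\colon \mathbb F_q^{r+1}\to\mathbb F_q^r$ for the projection killing the first coordinate. The elements of $\widetilde{M}$ other than $a$ are the vectors $(z,v)$ with $v\in E(M)$. We may identify a flat of $\widetilde{M}$ with the subspace it spans intersected with $E(\widetilde M)$. Since $M$ is simple, distinct $v$'s in $E(M)$ are pairwise independent, so $\widetilde{M}$ is simple as well. (The vectors $(z,v)$ and $(z',v)$ are independent for $z\ne z'$, and neither is parallel to $a$.)

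\emph{Flats containing $a$.} Let $F\ni a$ be a flat of rank $i$ and $U=\operatorname{span}(F)$. Since $a\in U$, the subspace $U$ is of the form $\mathbb F_q\oplus U'$ with $U'=\pi(U)$ of dimension $i-1$. Then $F=\{a\}\cup\{(z,v): v\in E(M)\cap U'\}$. Moreover $U'$ is spanned by $E(M)\cap U'$, so $E(M)\cap U'$ is a flat of $M$ of rank $i-1$. Conversely, each rank $i-1$ flat $G$ of $M$ gives the flat $\{a\}\cup\{(z,v):v\in G\}$ of $\widetilde M$, of rank $i$. These maps are inverse bijections, which accounts for the term $W_{i-1}(M)$.

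\emph{Flats not containing $a$.} Let $F\not\ni a$ be a flat of rank $i$ with span $U$. Then $U$ does not contain $a$, so $\pi|_U$ is injective. Hence $U$ is the graph $\{(\phi(u),u):u\in U'\}$ of a linear functional $\phi$ on $U'=\pi(U)$, where $\dim U'=i$. The flat is $F=\{(\phi(v),v): v\in E(M)\cap U'\}$. The set $G=E(M)\cap U'$ spans $U'$, because $\pi(F)$ spans $U'$ and $\pi(F)\subseteq G$. So $G$ is a rank $i$ flat of $M$.

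Conversely, given a rank $i$ flat $G$ of $M$ and any functional $\phi\in (\operatorname{span} G)^*$, the set $\{(\phi(v),v):v\in G\}$ spans a graph subspace $U$. The elements of $\widetilde{M}$ lying in $U$ are exactly these vectors. Indeed, $(z,v)\in U$ forces $v\in \operatorname{span}G\cap E(M)=G$ and $z=\phi(v)$, and $a\notin U$. So this set is a rank $i$ flat. There are $q^i$ choices of $\phi$ for each $G$, and distinct $\phi$ give distinct flats, since a functional is determined by its values on the spanning set $G$. This contributes $q^iW_i(M)$.

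Adding the two classes gives $W_i(\widetilde M)=q^iW_i(M)+W_{i-1}(M)$ for $1\le i\le r$. The main point to check carefully is the closure claim in each direction. One must verify that the intersection of the graph subspace with $E(\widetilde M)$ picks up no extra vectors. This rests on the closure of $G$ in $M$ together with the single-valuedness of $\phi$.
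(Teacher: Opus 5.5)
Your proof is correct and complete. There is no internal proof to compare it with: the paper does not prove this lemma but quotes it from Bonin--Qin, so your argument supplies a self-contained verification.

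Your route is the natural one. Flats through the apex $a$ correspond to flats of the contraction $\widetilde M/a$, which is $M$ with each element replaced by a parallel class of size $q$. Flats avoiding $a$ are the graphs of linear functionals on $\operatorname{span}(G)$ for flats $G$ of $M$. Equivalently, they are the $q^i$ hyperplanes of the $(i+1)$-dimensional space $\mathbb F_q a\oplus\operatorname{span}(G)$ that do not contain $a$.

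Two small remarks:
\begin{enumerate}
\item In the first case, the claim that $E(M)\cap U'$ spans $U'$ needs the same one-line justification you give in the second case: $F$ spans $U$, and $\pi(F)\subseteq\{0\}\cup(E(M)\cap U')$.
\item The simplicity of $\widetilde M$, which you check at the outset, is never used in the count. Only the simplicity of $M$ matters, since it makes the vectors of $E(M)$ distinct and nonzero.
\end{enumerate}
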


The following example illustrates the technique by which a failure of log-concavity is transformed into a failure of unimodality by an application of Whittle's $q$-lift.

\begin{example}\label[example]{ex:nonunimodal}
Consider the matroid of the generalized theta graph with four strands of lengths $1, 27, 28, 28$. This is a matroid of rank $81$ on a ground set of size $84$. We have 
\[
    (W_{77}, W_{78}, W_{79}, W_{80}) = (1648585224, 75732840, 1264437, 22275).
\]
Taking the $q$-lift for $q=59$, we obtain a matroid of rank $82$ on a ground set of size 4957 with
$$(\log_{10}W_{78}, \log_{10}W_{79}, \log_{10}W_{80}) \approx (146.00574, 145.99921, 146.01598),$$
so the Whitney numbers of this matroid are not unimodal.
\end{example}

We now explain how to apply \Cref{lm:q-lift} with even greater effect to the matroid described in \Cref{lm:asymptotics}, giving rise to severe failures of unimodality.

\begin{proof}[Proof of Theorem~\ref{thm:main}]
    Fixing an integer $m\ge1$ and letting $t$ be large, we set $M = G_t^{\oplus m}$. We will examine the asymptotic behavior of the Whitney numbers of $M$ as $t$ goes to infinity.
    Write $r = r(G_t^{\oplus m}) = m(3t-2)$. To use Whittle's construction, we choose a prime number $p$ within a factor 2 of $t^{3/2}$; such a prime exists by Bertrand's postulate.
    Since $M$ is regular, it is $\mathbb F_p$-representable, and hence we may choose a $p$-lift $\widetilde{M}$ of $M$. We write
    $\W{\eta}$ for $\W{\eta}(M)$ and $\widetilde{W}^\eta$ for $\W{\eta}(\widetilde{M})$. We will show that $\eta = m + 2s + 1$ is a peak of $\eta \mapsto \widetilde{W}^\eta$ for all $0 \leq s \leq m$. To see this,  we first use \Cref{lm:asymptotics} to deduce
    \[
        \frac{\W{m+2s-1}}{\W{m+2s}} = O(t^{-2}), \qquad 
         \frac{\W{m+2s+1}}{\W{m+2s}} = \Theta(t),
         \qquad 
         \frac{\W{m+2s+2}}{\W{m+2s}}=O(t^3).
    \]
    In fact, the first ratio is $\Theta(t^{-2})$ except when $s = 0$, in which case it is $\Theta(t^{-3})$. Similarly, the last ratio is $\Theta(t^3)$ except when $s = m$, in which case it is $\Theta(t^2)$.
    Rewriting \Cref{lm:q-lift} by corank and remembering that $\widetilde{M}$ has rank one greater than $M$, we get 
    $\widetilde{W}^{\eta} = p^{r+1-\eta}\W{\eta-1} + \W{\eta}$. Let $\ell = r - m - 2s$. Using $p = \Theta(t^{3/2})$ and $\ell = \Theta(t)$, we obtain
    \begin{align*}
        p^{-\ell}\cdot{\widetilde{W}^{m+2s}}\!/{W^{m+2s}} &= \Theta(t^{3/2}) \cdot  O(t^{-2}) + p^{-\ell}\cdot 1
        = O(t^{-1/2}),\\
       p^{-\ell}\cdot{\widetilde{W}^{m+2s+1}}\!/{W^{m+2s}} &= 
        1 + p^{-\ell}\cdot \Theta(t)
        = 1 + o(1),\\
        p^{-\ell}\cdot{\widetilde{W}^{m+2s+2}}\!/{\W{m+2s}} &= \Theta(t^{-3/2})\cdot  \Theta(t) + p^{-\ell}\cdot O(t^3)
        = \Theta(t^{-1/2}).
    \end{align*}
    For large $t$ (and $m$ fixed), this shows that $\widetilde{W}^{m+2s+1} > \max(\widetilde{W}^{m+2s}, \widetilde{W}^{m+2s+2})$.
    Since this holds for all 
    $0 \leq s \leq m$, the Whitney numbers of $\widetilde{M}$ have at least $m+1$ distinct peaks. 

    We have now seen how to construct matroids whose sequences of Whitney numbers have arbitrarily many peaks. To obtain a matroid with a \emph{prescribed} number of peaks, we start with a matroid having a potentially larger number of peaks and then reduce to the number we want by applying truncations.
    For details of this operation, see \cite[\textsection~7.3]{Oxley}. For our purposes, it will suffice to note that the Whitney numbers of the truncation of a matroid are obtained from its Whitney numbers by omitting the $W^1$ term. It is clear that the number of peaks goes down by at most one under such an omission and that we can thereby achieve any desired number of peaks.
\end{proof}

\bibstyle{amsalpha}
\bibliography{matroid}

\end{document}